\newtheorem{theorem}{Theorem}[section]
\theoremstyle{plain}
\newtheorem{corollary}{Corollary}[section]
\newtheorem{remark}{Remark}
\numberwithin{equation}{section}
\begin{document}

 \title[A companion of Ostrowski like inequality ]{A companion of Ostrowski like inequality for mappings whose second derivatives belong to $L^{\infty}$ spaces and applications}
\author[W. J. Liu]{Wenjun Liu}
\address[W. J. Liu]{College of Mathematics and Statistics\\
Nanjing University of Information Science and Technology \\
Nanjing 210044, China}
\email{wjliu@nuist.edu.cn}


 \subjclass[2010]{26D15, 41A55, 41A80, 65C50}
\keywords{Ostrowski like inequality; twice
differentiable mapping; $L^{\infty}$ spaces; composite quadrature rule; probability density function}

\begin{abstract}
A companion of Ostrowski like inequality
for mappings whose second derivatives belong to $L^{\infty}$ spaces is established. Applications to
composite quadrature rules, and to probability density functions are also given.
\end{abstract}

\maketitle

\section{Introduction}

In 1938, Ostrowski established the following interesting integral
inequality (see \cite{mpf1993}) for differentiable mappings with bounded
derivatives:
\begin{theorem}\label{Th1.1}
Let $f:[a,b]\rightarrow\mathbb{R}$ be a differentiable mapping on
$(a,b)$ whose derivative is bounded on $(a,b)$ and denote
$\|f'\|_{\infty}=\displaystyle{\sup_{t\in(a,b)}}|f'(t)|<\infty$.
Then for all $x\in[a,b]$ we have
\begin{equation}
\left|f(x)-\frac{1}{b-a}\int_{a}^{b}f(t)dt\right|\leq\left[\frac{1}{4}
+\frac{(x-\frac{a+b}{2})^{2}}{(b-a)^{2}}\right](b-a)\|f'\|_{\infty}.
\end{equation}\label{1.1}
The constant $\frac{1}{4}$ is sharp in the sense that it can not be
replaced by a smaller one.
\end{theorem}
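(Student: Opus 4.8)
The plan is to reduce the inequality to an integral representation of the error $f(x)-\frac{1}{b-a}\int_a^b f(t)\,dt$ and then estimate the resulting kernel. First I would introduce the Peano kernel
\begin{equation*}
p(x,t)=
\begin{cases}
t-a, & a\le t\le x,\\
t-b, & x<t\le b,
\end{cases}
\end{equation*}
and establish the Montgomery identity
\begin{equation*}
f(x)-\frac{1}{b-a}\int_a^b f(t)\,dt=\frac{1}{b-a}\int_a^b p(x,t)f'(t)\,dt.
\end{equation*}
This identity follows by splitting the integral at $t=x$ and integrating each piece by parts; the boundary contributions $(x-a)f(x)$ and $(b-x)f(x)$ combine to give $(b-a)f(x)$, while the remaining terms reproduce $-\int_a^b f(t)\,dt$.

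Next I would take absolute values in the identity and pull $\|f'\|_\infty$ out of the integral, obtaining
\begin{equation*}
\left|f(x)-\frac{1}{b-a}\int_a^b f(t)\,dt\right|\le\frac{\|f'\|_\infty}{b-a}\int_a^b |p(x,t)|\,dt.
\end{equation*}
Since $|p(x,t)|=t-a$ on $[a,x]$ and $|p(x,t)|=b-t$ on $[x,b]$, the kernel integral evaluates to $\frac{(x-a)^2+(b-x)^2}{2}$. The final step is purely algebraic: writing $x-a=\frac{b-a}{2}+\left(x-\frac{a+b}{2}\right)$ and $b-x=\frac{b-a}{2}-\left(x-\frac{a+b}{2}\right)$ and expanding shows that $(x-a)^2+(b-x)^2=\frac{(b-a)^2}{2}+2\left(x-\frac{a+b}{2}\right)^2$, which together with the prefactor $\frac{\|f'\|_\infty}{2(b-a)}$ reproduces exactly the right-hand side of the statement.

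For the sharpness of $\frac14$, I would examine the value $x=\frac{a+b}{2}$, where the bracketed factor attains its minimum $\frac14$. Equality in the kernel estimate demands that $f'$ agree in sign with $p\!\left(\frac{a+b}{2},t\right)$, which is nonnegative on $\left[a,\frac{a+b}{2}\right]$ and nonpositive on $\left[\frac{a+b}{2},b\right]$; this points to the tent mapping $f(t)=\left|t-\frac{a+b}{2}\right|$. A direct computation gives $f\!\left(\frac{a+b}{2}\right)=0$, $\frac{1}{b-a}\int_a^b f(t)\,dt=\frac{b-a}{4}$, and $\|f'\|_\infty=1$, so that both sides equal $\frac{b-a}{4}$. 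Since $f$ fails to be differentiable only at the single midpoint, I would make the argument honour the hypotheses by approximating $f$ with smooth mappings and showing that any constant smaller than $\frac14$ is violated in the limit.

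The main obstacle I anticipate lies not in the identity or the estimate, both of which are routine, but in the sharpness claim: the natural extremal function is not differentiable at the midpoint, so a careful limiting argument is needed to reconcile the extremal configuration with the differentiability assumption on $f$.
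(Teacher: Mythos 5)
The paper does not actually prove Theorem \ref{Th1.1}: it is quoted as Ostrowski's classical 1938 result with a pointer to \cite{mpf1993}, so there is no in-paper argument to compare against. Your proof is the standard one and is correct: the Montgomery identity with the Peano kernel $p(x,t)$ checks out (the boundary terms do sum to $(b-a)f(x)$), the kernel integral is indeed $\frac{(x-a)^2+(b-x)^2}{2}$, and the substitution $x-a=\frac{b-a}{2}+\left(x-\frac{a+b}{2}\right)$, $b-x=\frac{b-a}{2}-\left(x-\frac{a+b}{2}\right)$ gives exactly the stated bound. It is also the same kernel-plus-$L^\infty$-estimate technique the author uses for the paper's own Theorem \ref{Th2.1}, so your route is in the spirit of the paper. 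One simplification worth noting on the sharpness step: you can avoid the tent function and the mollification entirely by testing with the smooth function $f(t)=t$ at the endpoint $x=a$ (or $x=b$); there both sides equal $\frac{b-a}{2}$, and replacing $\frac{1}{4}$ by any $c<\frac{1}{4}$ would force the right-hand side down to $\left(c+\frac{1}{4}\right)(b-a)<\frac{b-a}{2}$, a contradiction with no limiting argument required. Your approximation argument at $x=\frac{a+b}{2}$ is also valid, just more labour for the same conclusion.
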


Motivated by \cite{gs2002}, Dragomir \cite{d2005} proved some companions of Ostrowski's
inequality, as follows:
\begin{theorem}\label{Th1.2}
Let $f:[a,b]\rightarrow\mathbb{R}$ be an absolutely continuous
function on $[a,b]$. Then the following inequalities
\begin{align}
&\left|\frac{f(x)+f(a+b-x)}{2}-\frac{1}{b-a}\int_{a}^{b}f(t)dt\right|
\nonumber
\\
\leq&\left\{
{\begin{array}{l}\left[\frac{1}{8}+2\left(\frac{x-\frac{3a+b}{4}}{b-a}\right)^{2}\right]
(b-a)\|f'\|_{\infty},\quad f'\in L^{\infty}[a,b], \\
\frac{2^{1/q}}{(q+1)^{1/q}}\left[\left(\frac{x-a}{b-a}\right)^{q+1}+
\left(\frac{\frac{a+b}{2}-x}{b-a}\right)^{q+1}\right]^{1/q}(b-a)^{1/q}\|f'\|_{p},\\
\quad \quad \quad\quad\quad
p>1, \frac{1}{p}+\frac{1}{q}=1\quad \text{and}\quad f'\in L^{p}[a,b], \\
\left[\frac{1}{4}+\left|\frac{x-\frac{3a+b}{4}}{b-a}\right|\right]\|f'\|_{1},
\quad\quad\quad \quad\quad\quad f'\in L^{1}[a,b] \\
\end{array}}\right. \label{1.2}
\end{align}
hold for all $x\in[a,\frac{a+b}{2}]$.
\end{theorem}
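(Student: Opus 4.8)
The plan is to derive a Montgomery-type integral identity tailored to the symmetric quantity $\frac{f(x)+f(a+b-x)}{2}$ and then to split into the three cases by applying, in turn, the $L^{\infty}$--$L^{1}$ pairing, H\"older's inequality, and the $L^{1}$--$L^{\infty}$ pairing. Since $x\in[a,\frac{a+b}{2}]$ we have the ordering $a\le x\le\frac{a+b}{2}\le a+b-x\le b$, which dictates the three-piece symmetric Peano kernel
\[
K(x,t)=\begin{cases} t-a, & t\in[a,x],\\[2pt] t-\frac{a+b}{2}, & t\in(x,a+b-x),\\[2pt] t-b, & t\in[a+b-x,b].\end{cases}
\]
I would first establish the identity
\[
\frac{f(x)+f(a+b-x)}{2}-\frac{1}{b-a}\int_{a}^{b}f(t)\,dt=\frac{1}{b-a}\int_{a}^{b}K(x,t)f'(t)\,dt .
\]

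The verification is a direct integration by parts performed separately on the three subintervals $[a,x]$, $(x,a+b-x)$, and $[a+b-x,b]$, valid because $f$ is absolutely continuous. On the two outer pieces the boundary terms produce $(x-a)f(x)$ and $(x-a)f(a+b-x)$, while on the middle piece, where the kernel vanishes at the midpoint $\frac{a+b}{2}$, the boundary terms contribute $\left(\frac{a+b}{2}-x\right)$ times \emph{both} $f(x)$ and $f(a+b-x)$. Summing the three pieces, the coefficient of $f(x)$ and the coefficient of $f(a+b-x)$ each collapse to $\frac{b-a}{2}$, and the three $-\int f$ remainders reassemble into $-\int_{a}^{b}f$; dividing by $b-a$ yields exactly the stated left-hand side.

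Granting the identity, all three bounds reduce to estimating $\frac{1}{b-a}\int_{a}^{b}K(x,t)f'(t)\,dt$. For $f'\in L^{\infty}$ I would factor out $\|f'\|_{\infty}$ and compute $\int_{a}^{b}|K(x,t)|\,dt$; since $K$ is piecewise linear and vanishes at the midpoint, each contribution is an elementary triangular area, giving $(x-a)^{2}+\left(\frac{a+b}{2}-x\right)^{2}$, which I would then recast in the centred form $\left[\frac{1}{8}+2\left(\frac{x-(3a+b)/4}{b-a}\right)^{2}\right](b-a)^{2}$. For $f'\in L^{p}$ I would apply H\"older's inequality with exponents $p,q$ and evaluate $\int_{a}^{b}|K(x,t)|^{q}\,dt=\frac{2}{q+1}\left[(x-a)^{q+1}+\left(\frac{a+b}{2}-x\right)^{q+1}\right]$, from which the displayed constant emerges after factoring $(b-a)^{q+1}$ out of the bracket and collecting the powers of $b-a$. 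For $f'\in L^{1}$ I would bound by $\|f'\|_{1}\cdot\sup_{t}|K(x,t)|=\|f'\|_{1}\cdot\max\left(x-a,\ \frac{a+b}{2}-x\right)$ and convert the maximum through $\max(A,B)=\frac{A+B}{2}+\frac{|A-B|}{2}$ into $\frac{b-a}{4}+\left|x-\frac{3a+b}{4}\right|$.

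The only real subtlety, and the step I would treat most carefully, is the algebraic recasting of each raw kernel estimate into the symmetric/centred form displayed in the theorem; the identity itself and the norm inequalities are routine. In particular I would double-check the sign behaviour on $(x,a+b-x)$, where $K$ changes sign at $\frac{a+b}{2}$, so that $|K|$ is integrated (in the $L^{\infty}$ and $L^{p}$ cases) and maximized (in the $L^{1}$ case) correctly, since a sign slip there is the quickest way to corrupt all three constants at once.
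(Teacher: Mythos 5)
Your proposal is correct: the three-piece symmetric Peano kernel you construct is exactly the kernel \eqref{2.2} used in this paper, and your Montgomery-type identity is precisely \eqref{2.3}, which is the route the cited source \cite{d2005} takes for Theorem \ref{Th1.2} (the paper itself imports the result without reproving it). Your kernel computations $\int_a^b|K|\,dt=(x-a)^2+\bigl(\tfrac{a+b}{2}-x\bigr)^2$, $\int_a^b|K|^q\,dt=\tfrac{2}{q+1}\bigl[(x-a)^{q+1}+\bigl(\tfrac{a+b}{2}-x\bigr)^{q+1}\bigr]$, and $\sup_t|K|=\max\bigl(x-a,\tfrac{a+b}{2}-x\bigr)$ all check out and recast correctly into the stated centred forms.
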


Recently, Alomari \cite{a2011} introduced a companion of Dragomir's
generalization of Ostrowsk's inequality for absolutely continuous
mappings whose first derivatives are belong to $L^{\infty}([a,b])$.

\begin{theorem}\label{Th1.3}
Let $f:[a,b]\rightarrow\mathbb{R}$ be an absolutely continuous
mappings on $(a,b)$ whose derivative is bounded on $[a,b]$. Then the
inequality
\begin{align}
&\left|\left[(1-\lambda)\frac{f(x)+f(a+b-x)}{2}+\lambda\frac{f(a)+f(b)}{2}\right]-
\frac{1}{b-a}\int_{a}^{b}f(t)dt\right| \nonumber \\
\leq&\left[\frac{1}{8}(2\lambda^{2}+(1-\lambda)^{2})+
2\frac{\left(x-\frac{(3-\lambda)a+(1+\lambda)b}{4}\right)^{2}}{(b-a)^{2}}\right](b-a)\|f'\|_{\infty}
\label{1.3}
\end{align}
holds for all $\lambda\in[0,1]$ and $x\in[a+\lambda\frac{b-a}{2},
\frac{a+b}{2}]$.
\end{theorem}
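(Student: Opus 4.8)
The plan is to use the Peano-kernel method that is standard for Ostrowski-type estimates. First I would introduce the kernel $P(x,\cdot):[a,b]\to\mathbb{R}$ defined by
\[
P(x,t)=\begin{cases}
t-\left(a+\lambda\dfrac{b-a}{2}\right), & a\le t\le x,\\[1mm]
t-\dfrac{a+b}{2}, & x< t\le a+b-x,\\[1mm]
t-\left(b-\lambda\dfrac{b-a}{2}\right), & a+b-x< t\le b.
\end{cases}
\]
This kernel is engineered so that its $t$-derivative is identically $1$ on each piece, while its interior jumps and its endpoint values reproduce the point evaluations $f(x),f(a+b-x),f(a),f(b)$ with precisely the weights appearing on the left-hand side of \eqref{1.3}.

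The key step is to establish the integral identity
\[
\frac{1}{b-a}\int_a^b P(x,t)f'(t)\,dt
=(1-\lambda)\frac{f(x)+f(a+b-x)}{2}+\lambda\frac{f(a)+f(b)}{2}-\frac{1}{b-a}\int_a^b f(t)\,dt,
\]
which I would prove by integrating by parts on each of the three subintervals (legitimate since $f$ is absolutely continuous). On $[a,x]$ this yields $\bigl[x-a-\lambda\tfrac{b-a}{2}\bigr]f(x)+\lambda\tfrac{b-a}{2}f(a)-\int_a^x f$; on $(x,a+b-x]$ it yields $\bigl[\tfrac{a+b}{2}-x\bigr]\bigl(f(x)+f(a+b-x)\bigr)-\int_x^{a+b-x} f$; and on $(a+b-x,b]$ it yields $\lambda\tfrac{b-a}{2}f(b)+\bigl[x-a-\lambda\tfrac{b-a}{2}\bigr]f(a+b-x)-\int_{a+b-x}^b f$. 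Summing, the three integrals combine into $-\int_a^b f$, the coefficients of $f(x)$ and of $f(a+b-x)$ each collapse to $(1-\lambda)\tfrac{b-a}{2}$, and those of $f(a)$ and $f(b)$ to $\lambda\tfrac{b-a}{2}$; dividing by $b-a$ gives the identity.

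With the identity in hand, taking absolute values and using $|f'(t)|\le\|f'\|_\infty$ produces the bound $\tfrac{\|f'\|_\infty}{b-a}\int_a^b|P(x,t)|\,dt$, so the remaining task—and the main obstacle—is to evaluate $\int_a^b|P(x,t)|\,dt$ in closed form. Here the hypothesis $x\in[a+\lambda\tfrac{b-a}{2},\tfrac{a+b}{2}]$ is exactly what guarantees that the zero $a+\lambda\tfrac{b-a}{2}$ of $P$ on $[a,x]$, and by symmetry the zero $b-\lambda\tfrac{b-a}{2}$ on $(a+b-x,b]$, each fall inside their own subinterval, so that every piece splits into a negative and a positive part. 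Computing the resulting six elementary integrals gives
\[
\int_a^b|P(x,t)|\,dt
=\lambda^2\frac{(b-a)^2}{4}+\left(x-a-\lambda\frac{b-a}{2}\right)^2+\left(\frac{a+b}{2}-x\right)^2.
\]
Finally, completing the square in $x$ and using the identity $\tfrac{(3-\lambda)a+(1+\lambda)b}{4}=a+\tfrac{(1+\lambda)(b-a)}{4}$, this quadratic reorganizes as
\[
\int_a^b|P(x,t)|\,dt
=2\left(x-\frac{(3-\lambda)a+(1+\lambda)b}{4}\right)^2+\frac{(b-a)^2}{8}\bigl(2\lambda^2+(1-\lambda)^2\bigr),
\]
and dividing by $b-a$ yields precisely the bracketed factor multiplying $(b-a)\|f'\|_\infty$ in \eqref{1.3}.
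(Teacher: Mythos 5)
Your proof is correct: the three-branch kernel, the integration-by-parts identity, and the evaluation of $\int_a^b|P(x,t)|\,dt$ (including the completion of the square using $\frac{(3-\lambda)a+(1+\lambda)b}{4}=a+\frac{(1+\lambda)(b-a)}{4}$) all check out, and the hypothesis $x\in[a+\lambda\frac{b-a}{2},\frac{a+b}{2}]$ is used exactly where needed to locate the zeros of the kernel inside their subintervals. The paper itself states this theorem without proof (quoting Alomari \cite{a2011}), but your Peano-kernel argument is the standard one and is the same strategy the paper uses for its own Theorem \ref{Th2.1}.
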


In \eqref{1.3}, choose $\lambda=\frac{1}{2}$, one can get
\begin{align}
&\left|\frac{1}{2}\left[\frac{f(x)+f(a+b-x)}{2}+\frac{f(a)+f(b)}{2}\right]
-\frac{1}{b-a}\int_{a}^{b}f(t)dt\right| \nonumber \\
\leq&\left[\frac{3}{32}+2\frac{(x-\frac{5a+3b}{8})^{2}}{(b-a)^{2}}\right](b-a)\|f'\|_{\infty}.
\label{1.4}
\end{align}
And if choose $x=\frac{a+b}{2}$, then one has
\begin{align}
\left|\frac{1}{2}\left[f\left(\frac{a+b}{2}\right)+\frac{f(a)+f(b)}{2}\right]
-\frac{1}{b-a}\int_{a}^{b}f(t)dt\right|\leq\frac{1}{8}(b-a)\|f'\|_{\infty}.
\label{1.5}
\end{align}
It's shown in \cite{a2011} that the constant $\frac{1}{8}$ is the best
possible.

In related work, Dragomir and Sofo \cite{ds2000}  developed the following
Ostrowski like integral inequality for twice differentiable mapping.

\begin{theorem}\label{Th1.4}
Let $f:[a,b]\rightarrow\mathbb{R}$ be a mapping whose first
derivative is absolutely continuous on $[a,b]$ and assume that the
second derivative $f''\in L^{\infty}([a,b])$. Then we have the
inequality
\begin{align}
&\left|\frac{1}{2}\left[f(x)+\frac{f(a)+f(b)}{2}\right]-
\frac{1}{2}\left(x-\frac{a+b}{2}\right)f'(x)-\frac{1}{b-a}\int_{a}^{b}f(t)dt\right| \nonumber \\
\leq&\left[\frac{1}{48}+\frac{1}{3}\frac{|x-\frac{a+b}{2}|^{3}}{(b-a)^{3}}\right]
(b-a)^{2}\|f''\|_{\infty},\label{1.6}
\end{align}
for all $x\in[a,b]$.
\end{theorem}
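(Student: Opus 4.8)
The plan is to represent the quantity on the left-hand side as a single integral against a Peano-type kernel that is piecewise quadratic in $t$, and then to bound that integral by $\|f''\|_{\infty}$ times the $L^{1}$-norm of the kernel. Writing $m=\frac{a+b}{2}$, the kernel I would use is
\[
K(x,t)=\left\{\begin{array}{ll}
-\frac{1}{2}(t-a)\left(t-m\right), & a\le t\le x, \\
-\frac{1}{2}(t-b)\left(t-m\right), & x<t\le b,
\end{array}\right.
\]
and the first step is to establish the identity
\[
\frac{1}{b-a}\int_{a}^{b}K(x,t)f''(t)\,dt
=\frac{1}{2}\left[f(x)+\frac{f(a)+f(b)}{2}\right]
-\frac{1}{2}\left(x-m\right)f'(x)
-\frac{1}{b-a}\int_{a}^{b}f(t)\,dt .
\]

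I would prove this identity by integrating by parts twice on each of $[a,x]$ and $[x,b]$ separately. On each piece $K_{tt}\equiv-1$, so the twice-integrated term returns $-\frac{1}{b-a}\int_{a}^{b}f$; the factors $(t-a)$ and $(t-b)$ are chosen precisely so that $K(x,a)=K(x,b)=0$, which kills the $f'(a)$ and $f'(b)$ contributions, while the boundary values $K_{t}(x,a)=\frac{b-a}{4}$ and $K_{t}(x,b)=-\frac{b-a}{4}$ deliver the term $\frac{1}{4}(f(a)+f(b))$. Finally, the jumps of $K$ and of $K_{t}$ across $t=x$, namely $\frac{1}{2}(x-m)(b-a)$ and $\frac{b-a}{2}$ respectively, produce exactly the $-\frac{1}{2}(x-m)f'(x)$ and $\frac{1}{2}f(x)$ terms.

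With the identity in hand, the inequality reduces to an $L^{1}$ estimate, since
\[
\left|\frac{1}{b-a}\int_{a}^{b}K(x,t)f''(t)\,dt\right|
\le\frac{\|f''\|_{\infty}}{b-a}\int_{a}^{b}|K(x,t)|\,dt ,
\]
so it remains only to show $\int_{a}^{b}|K(x,t)|\,dt=\left[\frac{1}{48}+\frac{1}{3}\frac{|x-m|^{3}}{(b-a)^{3}}\right](b-a)^{3}$. Because the change of variables $t\mapsto a+b-s$ gives $K(x,a+b-s)=K(a+b-x,s)$, the quantity $\int_{a}^{b}|K(x,t)|\,dt$ is invariant under $x\mapsto a+b-x$, and I may therefore assume $a\le x\le m$ without loss of generality.

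The main obstacle is this last evaluation, because $K$ changes sign inside $[x,b]$. For $a\le x\le m$ one checks that $K\ge0$ on $[a,x]$, that $K\le0$ on $[x,m]$, and that $K\ge0$ on $[m,b]$, so that
\[
\int_{a}^{b}|K(x,t)|\,dt
=\frac{1}{2}\int_{a}^{x}(t-a)(m-t)\,dt
+\frac{1}{2}\int_{x}^{m}(b-t)(m-t)\,dt
+\frac{1}{2}\int_{m}^{b}(b-t)(t-m)\,dt .
\]
Each integrand is a cubic, and the bookkeeping needed to collect the resulting powers of $(x-m)$ and $(b-a)$ is the delicate part, though no conceptual difficulty arises beyond it. As a consistency check, at $x=m$ the middle integral vanishes and the two outer integrals each equal $\frac{(b-a)^{3}}{96}$, giving $\int_{a}^{b}|K|\,dt=\frac{(b-a)^{3}}{48}$, which reproduces the constant $\frac{1}{48}$; the remaining $x$-dependence assembles into the term $\frac{1}{3}\frac{|x-m|^{3}}{(b-a)^{3}}$, completing the bound.
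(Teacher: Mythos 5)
Your proposal is correct, and I checked the one step you left as ``bookkeeping'': with $h=\frac{b-a}{2}$ and $v=m-x\ge 0$ the three pieces sum to $\frac{1}{2}\bigl(\frac{h^{3}}{3}+\frac{2v^{3}}{3}\bigr)=\frac{(b-a)^{3}}{48}+\frac{|x-m|^{3}}{3}$, exactly as claimed, and the boundary values and jumps you list for $K$ and $K_{t}$ do produce the stated identity. Note, however, that the paper does not prove Theorem \ref{Th1.4} at all: it is quoted from Dragomir and Sofo \cite{ds2000}, so there is no in-paper proof to match against. The closest internal comparison is the proof of Theorem \ref{Th2.1}, where the author reaches the second-order kernel indirectly: he starts from the first-order identity \eqref{2.3} for a piecewise \emph{linear} kernel $K(t)$, applies it to the auxiliary function $g(t)=(t-\frac{a+b}{2})f'(t)$, and combines the resulting identities so that the error term appears as $\frac{1}{2(b-a)}\int_{a}^{b}K(t)(t-\frac{a+b}{2})f''(t)\,dt$; the product $\frac{1}{2}K(t)(t-\frac{a+b}{2})$ is precisely the piecewise quadratic Peano kernel analogous to the one you write down directly. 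Your route (explicit quadratic kernel plus two integrations by parts) is more self-contained and makes the vanishing of the $f'(a)$, $f'(b)$ boundary terms transparent by construction; the paper's route avoids computing second-order boundary and jump terms by recycling a known first-order identity, at the cost of an extra auxiliary function. Both end with the same reduction to an $L^{1}$ bound on the kernel and the same sign analysis of a quadratic on three subintervals.
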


 In \eqref{1.6}, the authors pointed out that the midpoint
 $x=\frac{a+b}{2}$ gives the best estimator, i.e.,
\begin{align}
\left|\frac{1}{2}\left[f\left(\frac{a+b}{2}\right)+\frac{f(a)+f(b)}{2}\right]
-\frac{1}{b-a}\int_{a}^{b}f(t)dt\right|\leq\frac{1}{48}(b-a)^{2}\|f''\|_{\infty}.\label{1.7}
\end{align}
In fact, we can choose  $f (t) = (t-a)^2$ in \eqref{1.7} to  prove that the constant $\frac{1}{48}$ in inequality \eqref{1.7} is sharp.

For other related results, the reader may be refer to \cite{a20111, a20112, a20113, bdg2009, d20051,  d2011, d2001, hn2011, l2009, s20101, s2010, ss2011, thd2008, thyc2011, v2011} and the
references therein.

Motivated by previous works \cite{a2011, d2002, d2005, ds2000}, we investigate in this paper a
companion of the above mentioned Ostrowski like integral inequality
for twice differentiable mappings. Our result gives a smaller
estimator than \eqref{1.7} (see \eqref{2.9} below). Some
applications to
composite quadrature rules, and to probability density functions are also given.

\section{A companion of Ostrowski like inequality}

The following companion of Ostrowski like inequality holds:

\begin{theorem}\label{Th2.1}
Let $f:[a,b]\rightarrow\mathbb{R}$ be a mapping whose first
derivative is absolutely continuous on $[a,b]$ and assume that the
second derivative $f''\in L^{\infty}([a,b])$. Then we have the
inequality
\begin{align}
&\left|\frac{1}{2}\left[\frac{f(x)+f(a+b-x)}{2}+\frac{f(a)+f(b)}{2}\right]\right.\nonumber \\
&\left.-\frac{1}{2}\left(x-\frac{a+b}{2}\right)\frac{f'(x)-f'(a+b-x)}{2}
-\frac{1}{b-a}\int_{a}^{b}f(t)dt\right| \nonumber \\
\leq&\left[\frac{1}{3}\frac{(\frac{a+3b}{4}-x)(x-a)^{2}}{(b-a)^{3}}+
\frac{1}{3}\frac{(\frac{a+b}{2}-x)^{3}}{(b-a)^{3}}\right](b-a)^{2}\|f''\|_{\infty}\label{2.1}
\end{align}
for all $x\in[a,\frac{a+b}{2}]$. The first constant $\frac{1}{3}$ in the right hand side of \eqref{2.1}  is sharp in the sense that it can not be
replaced by a smaller one provided that $x\neq \frac{a+3b}{4}$ and $x\neq a$.
\end{theorem}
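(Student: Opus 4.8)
The plan is to derive \eqref{2.1} from a second-order Peano-kernel identity and then to read off the sharp constant from the extremal function. First I would introduce a kernel $K(x,\cdot)$ on $[a,b]$, defined piecewise on the three subintervals $[a,x]$, $[x,a+b-x]$, $[a+b-x,b]$ determined by the ordering $a\le x\le\frac{a+b}{2}\le a+b-x\le b$, chosen to be symmetric about $\frac{a+b}{2}$ and to be a piecewise quadratic in $t$ with jump discontinuities at $t=x$ and $t=a+b-x$. The jump heights and the additive constant on each piece are prescribed precisely so that, on integrating $\int_a^b K(x,t)f''(t)\,dt$ by parts twice, the boundary evaluations reproduce $\frac{f(x)+f(a+b-x)}{2}$ and $\frac{f(a)+f(b)}{2}$, the jump terms reproduce $-\frac12\left(x-\frac{a+b}{2}\right)\frac{f'(x)-f'(a+b-x)}{2}$, and the remaining integral reproduces $-\frac{1}{b-a}\int_a^b f(t)\,dt$. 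The first goal is therefore to verify that the bracketed combination on the left of \eqref{2.1} equals $\frac{1}{b-a}\int_a^b K(x,t)f''(t)\,dt$, which is pure integration by parts once the kernel is written down correctly.

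Given the identity, the inequality follows at once from $\left|\int_a^b K(x,t)f''(t)\,dt\right|\le\|f''\|_\infty\int_a^b|K(x,t)|\,dt$, so the substance is the evaluation of $\int_a^b|K(x,t)|\,dt$. Here I would exploit the symmetry of $K$ about $\frac{a+b}{2}$ to reduce the computation to $[a,\frac{a+b}{2}]$, and then integrate the piecewise quadratic explicitly; the contributions from $[a,x]$ and from $[x,\frac{a+b}{2}]$ should assemble into $\left[\frac13\frac{(\frac{a+3b}{4}-x)(x-a)^2}{(b-a)^3}+\frac13\frac{(\frac{a+b}{2}-x)^3}{(b-a)^3}\right](b-a)^3$, which yields \eqref{2.1} after multiplying by $\frac{1}{b-a}\|f''\|_\infty$.

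I expect the main obstacle to be twofold. The delicate part of the derivation is fixing the jump heights and the constants on each piece of $K$ so that no spurious boundary terms survive; this is what determines the kernel uniquely. The delicate part of the estimate is the sign analysis needed to pass from $\int K$ to $\int|K|$: a test with $f(t)=t^2$, for which $f''$ is constant, produces a strictly positive defect when $x<\frac{a+b}{2}$, which shows that $K(x,\cdot)$ genuinely changes sign on $[a,b]$, so the integral of $|K|$ must be split at the interior zeros of the kernel and not merely at $x$ and $a+b-x$.

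For the sharpness of the leading $\frac13$ I would use the extremal function for the modulus bound: taking $f''(t)=\|f''\|_\infty\,\operatorname{sgn}K(x,t)$ produces an admissible mapping, since its piecewise-constant second derivative lies in $L^\infty([a,b])$ and its piecewise-linear first derivative is absolutely continuous, and for it $\left|\int_a^b K(x,t)f''(t)\,dt\right|=\|f''\|_\infty\int_a^b|K(x,t)|\,dt$, i.e. equality holds throughout \eqref{2.1}. Consequently, as long as the first term is genuinely present, that is $(\frac{a+3b}{4}-x)(x-a)^2\ne0$, which is exactly the hypothesis $x\ne a$ and $x\ne\frac{a+3b}{4}$, lowering the leading $\frac13$ to any smaller constant would make the right-hand side strictly smaller than the left for this extremal $f$, contradicting the inequality; hence $\frac13$ cannot be replaced by a smaller one. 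I note that at the midpoint $x=\frac{a+b}{2}$ one already obtains equality from the simpler choice $f(t)=t^2$, consistently with the defect above vanishing there.
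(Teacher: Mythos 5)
Your proposal is correct, and its core is the same as the paper's: both arguments reduce the left-hand side of \eqref{2.1} to $\frac{1}{b-a}\int_a^b \mathcal{K}(x,t)f''(t)\,dt$ for a piecewise-quadratic second-order kernel and then apply $\bigl|\int \mathcal{K}f''\bigr|\le\|f''\|_\infty\int|\mathcal{K}|$. The only structural difference in the derivation is that the paper does not postulate the kernel and integrate by parts twice; it manufactures it as $\frac12 K(t)\bigl(t-\frac{a+b}{2}\bigr)$ with $K$ the standard first-order companion kernel ($t-a$, $t-\frac{a+b}{2}$, $t-b$ on the three subintervals), by applying Dragomir's identity \eqref{2.3} once to $f$ and once to $g(t)=(t-\frac{a+b}{2})f'(t)$ and combining with the trapezoid identity \eqref{2.5} --- a purely cosmetic difference. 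One small correction to your anticipated difficulty: the three pieces $(t-a)(t-\frac{a+b}{2})$, $(t-\frac{a+b}{2})^2$ and $(t-b)(t-\frac{a+b}{2})$ each keep a constant sign on their respective intervals when $x\in[a,\frac{a+b}{2}]$, so the kernel's sign changes occur exactly at the break points $x$ and $a+b-x$ and there are no interior zeros; splitting there (as the paper does) already gives $\int_a^b|\mathcal{K}|\,dt=\frac{(a+3b-4x)(x-a)^2}{12}+\frac13\bigl(\frac{a+b}{2}-x\bigr)^3$. Where you genuinely depart from the paper is the sharpness claim, and there your route is the stronger one: the paper only verifies sharpness at $x=\frac{a+b}{2}$ via $f(t)=(t-a)^2$ (the remark following \eqref{1.7}), which does not literally cover the statement ``for all $x\neq a$, $x\neq\frac{a+3b}{4}$,'' whereas your extremal choice $f''=\|f''\|_\infty\operatorname{sgn}\mathcal{K}(x,\cdot)$ (admissible, since $f'$ is then piecewise linear and absolutely continuous) forces equality in \eqref{2.1} for each fixed $x$, from which non-improvability of the first $\frac13$ follows whenever $(\frac{a+3b}{4}-x)(x-a)^2\neq0$; note that $x\neq\frac{a+3b}{4}$ is automatic on $[a,\frac{a+b}{2}]$, so only $x\neq a$ is a real restriction.
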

\begin{proof}
Define the kernel $K(t):[a,b]\rightarrow\mathbb{R}$ by
\begin{align}
K(t):=\left\{ {\begin{array}{l} t-a, \quad \quad t\in[a,x], \\
t-\frac{a+b}{2}, \quad t\in(x,a+b-x], \\
t-b, \quad \quad t\in(a+b-x,b],\\
\end{array}}\right. \label{2.2}
\end{align}
for all $x\in[a,\frac{a+b}{2}]$. Integrating by parts, we obtain
(see  \cite{d2005})
\begin{align}
\frac{1}{b-a}\int_{a}^{b}K(t)g'(t)dt=\frac{g(x)+g(a+b-x)}{2}-\frac{1}{b-a}\int_{a}^{b}g(t)dt.
\label{2.3}
\end{align}
Now choose in \eqref{2.3}, $g(x)=(x-\frac{a+b}{2})f'(x)$, to get
\begin{align}
&\frac{1}{b-a}\int_{a}^{b}K(t)\left[f'(t)+\left(t-\frac{a+b}{2}\right)f''(t)\right]dt \nonumber \\
=&\frac{1}{2}\left(x-\frac{a+b}{2}\right)\left[f'(x)-f'(a+b-x)\right]-\frac{1}{b-a}
\int_{a}^{b}\left(t-\frac{a+b}{2}\right)f'(t)dt. \label{2.4}
\end{align}
Integrating by parts, we have
\begin{align}
\frac{1}{b-a}
\int_{a}^{b}\left(t-\frac{a+b}{2}\right)f'(t)dt=\frac{f(a)+f(b)}{2}-\frac{1}{b-a}
\int_{a}^{b}f(t)dt. \label{2.5}
\end{align}
Also upon using \eqref{2.3}, we get
\begin{align}
&\frac{1}{b-a}\int_{a}^{b}K(t)\left[f'(t)+\left(t-\frac{a+b}{2}\right)f''(t)\right]dt \nonumber \\
=&\frac{1}{b-a}\int_{a}^{b}K(t)f'(t)dt+\frac{1}{b-a}
\int_{a}^{b}K(t)\left(t-\frac{a+b}{2}\right)f''(t)dt  \nonumber \\
=&\frac{f(x)+f(a+b-x)}{2}-\frac{1}{b-a}\int_{a}^{b}f(t)dt+\frac{1}{b-a}
\int_{a}^{b}K(t)\left(t-\frac{a+b}{2}\right)f''(t)dt. \label{2.6}
\end{align}
It follows from \eqref{2.4}--\eqref{2.6} that
\begin{align}
&\frac{1}{2(b-a)}
\int_{a}^{b}K(t)\left(t-\frac{a+b}{2}\right)f''(t)dt \nonumber \\
=&\frac{1}{b-a}\int_{a}^{b}f(t)dt-\frac{1}{2}\left[\frac{f(x)+f(a+b-x)}{2}+\frac{f(a)+f(b)}{2}\right] \nonumber \\
&+\frac{1}{2}\left(x-\frac{a+b}{2}\right)\frac{f'(x)-f'(a+b-x)}{2}.\label{2.7}
\end{align}
Now using \eqref{2.7} we obtain
\begin{align}
&\left|\frac{1}{2}\left[\frac{f(x)+f(a+b-x)}{2}+\frac{f(a)+f(b)}{2}\right]\right. \nonumber\\
&\left. -\frac{1}{2}\left(x-\frac{a+b}{2}\right)\frac{f'(x)-f'(a+b-x)}{2}-\frac{1}{b-a}\int_{a}^{b}f(t)dt\right| \nonumber \\
\leq&\frac{\|f''\|_{\infty}}{2(b-a)}\int_{a}^{b}|K(t)|\left|t-\frac{a+b}{2}\right|dt.
\label{2.8}
\end{align}

Since $x\in[a,\frac{a+b}{2}]$, we have
\begin{align*}
I:=&\int_{a}^{b}|K(t)|\left|t-\frac{a+b}{2}\right|dt  \\
=&\int_{a}^{x}(t-a)\left|t-\frac{a+b}{2}\right|dt+\int_{x}^{a+b-x}\left(t-\frac{a+b}{2}\right)^{2}dt
+\int_{a+b-x}^{b}(b-t)\left|t-\frac{a+b}{2}\right|dt  \\
=&\int_{a}^{x}(t-a)\left(\frac{a+b}{2}-t\right)dt+\int_{x}^{a+b-x}\left(t-\frac{a+b}{2}\right)^{2}dt
+\int_{a+b-x}^{b}(b-t)\left(t-\frac{a+b}{2}\right)dt \\
=&\frac{(a+3b-4x)(x-a)^{2}}{12}+\frac{2}{3}\left(\frac{a+b}{2}-x\right)^{3}+\frac{(a+3b-4x)(x-a)^{2}}{12}
\\
=&\frac{(a+3b-4x)(x-a)^{2}}{6}+\frac{2}{3}\left(\frac{a+b}{2}-x\right)^{3},
\end{align*}
and referring to \eqref{2.8}, we obtain the result \eqref{2.1}.

The sharpness of the constant $\frac{1}{3}$ can be proved in a special case for  $x=\frac{a+b}{2}$ (see the line behind \eqref{1.7}).
\end{proof}

\begin{remark} \label{re1}
If we take $x=\frac{a+b}{2}$ in \eqref{2.1}, we recapture the sharp inequality \eqref{1.7}.
If we take $x=a$ in \eqref{2.1}, we obtain the perturbed trapezoid type inequality
$$
\left|\frac{f(a)+f(b)}{2}-\frac{b-a}{8}[f'(b)-f'(a)]
-\frac{1}{b-a}\int_{a}^{b}f(t)dt\right|\leq\frac{(b-a)^{2}}{24}\|f''\|_{\infty},
$$
which has a smaller estimator than the sharp trapezoid  inequality
$$
\left|\frac{f(a)+f(b)}{2}
-\frac{1}{b-a}\int_{a}^{b}f(t)dt\right|\leq\frac{(b-a)^{2}}{8}\|f''\|_{\infty}
$$
stated in \cite[Proposition 2.7]{d2001}.\end{remark}

\begin{remark} \label{re2}
Consider
\begin{align*}
F(x)=\left(\frac{a+3b}{4}-x\right)(x-a)^{2}+\left(\frac{a+b}{2}-x\right)^{3}
\end{align*}
for $x\in[a,\frac{a+b}{2}]$. It's easy to know that $F(x)$ obtains
its minimal value at $x=\frac{3a+b}{4}$. Therefore, in \eqref{2.1},
the point $x=\frac{3a+b}{4}$ gives the best estimator, i.e.,
\begin{align}
&\left|\frac{1}{2}\left[\frac{f(\frac{3a+b}{4})+f(\frac{a+3b}{4})}{2}
+\frac{f(a)+f(b)}{2}\right]+\frac{b-a}{8}\frac{f'(\frac{3a+b}{4})
-f'(\frac{a+3b}{4})}{2}-\frac{1}{b-a}\int_{a}^{b}f(t)dt\right|
\nonumber \\
\leq&\frac{1}{64}(b-a)^{2}\|f''\|_{\infty}, \label{2.9}
\end{align}
the right hand side of which is smaller than that of \eqref{1.7}.
\end{remark}

\section{Application to Composite Quadrature Rules}

In \cite{ds2000}, the authors utilized inequality \eqref{1.6} to give
estimates of composite quadrature rules which was pointed out have a
markedly smaller error than that which may be obtained by the
classical results. In this section, we apply our previous inequality
\eqref{2.1} to give us estimates of new composite quadrature rules
which have a further smaller error.

\begin{theorem}\label{Th3.1}
Let $I_{n}: a=x_{0}<x_{1}<\cdot\cdot\cdot<x_{n-1}<x_{n}=b$ be a
partition of the interval $[a,b]$, $h_{i}=x_{i+1}-x_{i}$,
$\nu(h):=\max\{h_{i}:i=1,\cdot\cdot\cdot,n\}$,
$\xi_{i}\in[x_{i},\frac{x_{i}+x_{i+1}}{2}]$, and
\begin{align*}
S(f,I_{n},\xi)=&\frac{1}{4}\sum_{i=0}^{n-1}\left[f(x_{i})+f(\xi_{i})+f(x_{i}+x_{i+1}-\xi_{i})+f(x_{i+1})\right]h_{i}
\\
&-\frac{1}{4}\sum_{i=0}^{n-1}h_{i}\left(\xi_{i}-\frac{x_{i}+x_{i+1}}{2}\right)\left[f'(\xi_{i})
-f'(x_{i}+x_{i+1}-\xi_{i})\right],
\end{align*}
then
\begin{align*}
\int_{a}^{b}f(x)dx=S(f,I_{n},\xi)+R(f,I_{n},\xi)
\end{align*}
and the remainder $R(f,I_{n},\xi)$ satisfies the estimate
\begin{align}
|R(f,I_{n},\xi)|\leq\frac{1}{3}\|f''\|_{\infty}\left[\sum_{i=0}^{n-1}\left(\frac{x_{i}
+3x_{i+1}}{4}-\xi_{i}\right)(x_{i}-\xi_{i})^{2}+\sum_{i=0}^{n-1}\left(\frac{x_{i}
+x_{i+1}}{2}-\xi_{i}\right)^{3}\right]. \label{3.1}
\end{align}
\end{theorem}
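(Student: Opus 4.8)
The plan is to apply the single-interval estimate \eqref{2.1} from Theorem~\ref{Th2.1} on each subinterval $[x_i,x_{i+1}]$ and then sum the local errors, using the triangle inequality to pass from the sum to the total remainder. This is the standard device for turning a pointwise quadrature inequality into a composite one, so the main work is bookkeeping rather than any new analytic idea.

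\textbf{Step 1: Localize.} First I would fix an index $i$ and apply Theorem~\ref{Th2.1} to the function $f$ restricted to $[x_i,x_{i+1}]$, with the roles of $a,b,x$ played by $x_i,x_{i+1},\xi_i$ respectively. The hypothesis $\xi_i\in[x_i,\tfrac{x_i+x_{i+1}}{2}]$ is exactly the admissibility condition $x\in[a,\tfrac{a+b}{2}]$ required by the theorem, and $f''\in L^\infty([a,b])$ guarantees $f''\in L^\infty([x_i,x_{i+1}])$ with $\|f''\|_{\infty,[x_i,x_{i+1}]}\le\|f''\|_\infty$. Writing out \eqref{2.1} on this subinterval and multiplying through by $h_i=x_{i+1}-x_i$ (since the left-hand side of \eqref{2.1} contains the averaged integral $\tfrac{1}{b-a}\int_a^b$, and here $b-a=h_i$) yields
\begin{align*}
&\left|\int_{x_i}^{x_{i+1}}f(t)\,dt-\frac{h_i}{2}\left[\frac{f(x_i)+f(x_{i+1})}{2}+\frac{f(\xi_i)+f(x_i+x_{i+1}-\xi_i)}{2}\right]\right.\\
&\left.\quad+\frac{h_i}{2}\left(\xi_i-\frac{x_i+x_{i+1}}{2}\right)\frac{f'(\xi_i)-f'(x_i+x_{i+1}-\xi_i)}{2}\right|\\
&\qquad\le\frac{1}{3}\|f''\|_\infty\left[\left(\frac{x_i+3x_{i+1}}{4}-\xi_i\right)(x_i-\xi_i)^2+\left(\frac{x_i+x_{i+1}}{2}-\xi_i\right)^3\right].
\end{align*}

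\textbf{Step 2: Identify the local quadrature term and sum.} Next I would observe that the collection of non-integral terms in the left-hand side above is precisely the $i$-th summand of $S(f,I_n,\xi)$; summing these over $i=0,\dots,n-1$ reconstructs $S(f,I_n,\xi)$, while the integrals telescope via additivity, $\sum_{i=0}^{n-1}\int_{x_i}^{x_{i+1}}f=\int_a^b f$. Hence $R(f,I_n,\xi)=\int_a^b f-S(f,I_n,\xi)=\sum_{i=0}^{n-1}R_i$, where $R_i$ is the $i$-th local remainder bounded in Step~1.

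\textbf{Step 3: Triangle inequality.} Finally, applying $|R(f,I_n,\xi)|\le\sum_{i=0}^{n-1}|R_i|$ and inserting the $n$ local bounds gives \eqref{3.1} directly, since the factor $\tfrac13\|f''\|_\infty$ is common to every term and can be pulled outside the sum. The only point requiring a moment's care — and the closest thing to an obstacle — is verifying the exact correspondence of signs and factors of $h_i$ between the $i$-th term of $S(f,I_n,\xi)$ and the terms produced by \eqref{2.1} after clearing the $\tfrac{1}{h_i}$; once that matching is confirmed, the proof is immediate.
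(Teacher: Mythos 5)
Your proposal is correct and follows essentially the same route as the paper: the paper likewise rewrites \eqref{2.1} in the cleared form \eqref{3.2}, applies it on each subinterval $[x_i,x_{i+1}]$ with $x=\xi_i$, and sums using the triangle inequality. The sign and $h_i$ bookkeeping in your Step 1 matches the paper's \eqref{3.2} exactly, so nothing is missing.
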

\begin{proof}
Inequality \eqref{2.1} can be written as
\begin{align}
&\left|\int_{a}^{b}f(t)dt-\frac{1}{4}\left[f(a)+f(x)+f(a+b-x)+f(b)\right](b-a)\right.\nonumber\\
&\left. +
\frac{b-a}{4}\left(x-\frac{a+b}{2}\right)[f'(x)-f'(a+b-x)]
\right| \nonumber \\
\leq&\frac{1}{3}\left[\left(\frac{a+3b}{4}-x\right)(x-a)^{2}+
\left(\frac{a+b}{2}-x\right)^{3}\right]\|f''\|_{\infty}.\label{3.2}
\end{align}
Applying \eqref{3.2} on $\xi_{i}\in[x_{i},\frac{x_{i}+x_{i+1}}{2}]$,
we have
\begin{align*}
&\left|\int_{x_{i}}^{x_{i+1}}f(t)dt-\frac{1}{4}\left[f(x_{i})+f(\xi_{i})+
f(x_{i}+x_{i+1}-\xi_{i})+f(x_{i+1})\right]h_{i}\right. \\
&+\left.\frac{h_{i}}{4}\left(\xi_{i}
-\frac{x_{i}+x_{i+1}}{2}\right)\left[f'(\xi_{i})
-f'(x_{i}+x_{i+1}-\xi_{i})\right]\right| \\
\leq&\frac{1}{3}\left[\left(\frac{x_{i}+3x_{i+1}}{4}-\xi_{i}\right)(x_{i}-\xi_{i})^{2}
+\left(\frac{x_{i}+x_{i+1}}{2}-\xi_{i}\right)^{3}\right]\|f''\|_{\infty}.
\end{align*}
Now summing over $i$ from $0$ to $n-1$ and utilizing the triangle
inequality, we have
\begin{align*}
&\left|\int_{a}^{b}f(t)dt-S(f,I_{n},\xi)\right| \\
=&\left|\sum_{i=0}^{n-1}\int_{x_{i}}^{x_{i+1}}f(t)dt-\frac{1}{4}\sum_{i=0}^{n-1}\left[f(x_{i})+f(\xi_{i})+
f(x_{i}+x_{i+1}-\xi_{i})+f(x_{i+1})\right]h_{i}\right. \\
&+\left.\frac{1}{4}\sum_{i=0}^{n-1}h_{i}\left(\xi_{i}
-\frac{x_{i}+x_{i+1}}{2}\right)\left[f'(\xi_{i})
-f'(x_{i}+x_{i+1}-\xi_{i})\right]\right| \\
\leq&\frac{1}{3}\|f''\|_{\infty}\sum_{i=0}^{n-1}
\left[\left(\frac{x_{i}+3x_{i+1}}{4}-\xi_{i}\right)(x_{i}-\xi_{i})^{2}
+\left(\frac{x_{i}+x_{i+1}}{2}-\xi_{i}\right)^{3}\right]
\end{align*}
and therefore \eqref{3.1} holds.
\end{proof}

\begin{corollary}
 If we choose $\xi_{i}=\frac{3x_{i}+x_{i+1}}{4}$,
then we have
\begin{align*}
\overline{S}(f,I_{n})=&\frac{1}{4}\sum_{i=0}^{n-1}\left[f(x_{i})+f\left(\frac{3x_{i}+x_{i+1}}{4}\right)
+f\left(\frac{x_{i}+3x_{i+1}}{4}\right)+f(x_{i+1})\right]h_{i}\\
&+\frac{1}{16}\sum_{i=0}^{n-1} \left[f'\left(\frac{3x_{i}+x_{i+1}}{4}\right)
-f'\left(\frac{x_{i}+3x_{i+1}}{4}\right)\right]h_{i}^{2}
\end{align*}
and
\begin{align}
 |\overline{R}(f,I_{n})|\leq\frac{1}{64}\|f''\|_{\infty}\sum_{i=0}^{n-1}h_{i}^{3}.
 \label{3.3}
\end{align}
\end{corollary}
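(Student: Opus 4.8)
The plan is to obtain both assertions of the corollary purely by specializing Theorem~\ref{Th3.1} to the node $\xi_i=\frac{3x_i+x_{i+1}}{4}$, which is the interval-level analogue of the optimal abscissa $x=\frac{3a+b}{4}$ singled out in Remark~\ref{re2}. No new analytic input is required: everything reduces to substituting this value into the formula for $S(f,I_n,\xi)$ and into the right-hand side of \eqref{3.1} and simplifying.

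First I would verify the closed form of $\overline{S}(f,I_n)$ by recording the two elementary identities produced by this choice of node. The reflected node is
\[
x_i+x_{i+1}-\xi_i=x_i+x_{i+1}-\frac{3x_i+x_{i+1}}{4}=\frac{x_i+3x_{i+1}}{4},
\]
so the four values $f(x_i),\,f(\xi_i),\,f(x_i+x_{i+1}-\xi_i),\,f(x_{i+1})$ become precisely the arguments displayed in the first sum of $\overline{S}$. For the derivative-correction sum I would compute the offset from the midpoint,
\[
\xi_i-\frac{x_i+x_{i+1}}{2}=\frac{3x_i+x_{i+1}}{4}-\frac{2x_i+2x_{i+1}}{4}=-\frac{h_i}{4},
\]
and substitute this into the term $-\frac14\sum_i h_i\bigl(\xi_i-\frac{x_i+x_{i+1}}{2}\bigr)\bigl[f'(\xi_i)-f'(x_i+x_{i+1}-\xi_i)\bigr]$ of $S(f,I_n,\xi)$. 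The leading $-\frac14$ times the factor $-\frac{h_i}{4}$ collapses to $+\frac{1}{16}h_i^2$, giving exactly the second sum of the stated formula for $\overline{S}$.

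For the error estimate I would substitute the same node into the bracketed sum on the right of \eqref{3.1} and evaluate the three per-interval quantities:
\[
\frac{x_i+3x_{i+1}}{4}-\xi_i=\frac{h_i}{2},\qquad
(x_i-\xi_i)^2=\frac{h_i^2}{16},\qquad
\left(\frac{x_i+x_{i+1}}{2}-\xi_i\right)^3=\frac{h_i^3}{64}.
\]
Thus the first sum in \eqref{3.1} contributes $\sum_i\frac{h_i}{2}\cdot\frac{h_i^2}{16}=\frac{1}{32}\sum_i h_i^3$ and the second contributes $\frac{1}{64}\sum_i h_i^3$; adding these and multiplying by the prefactor $\frac13\|f''\|_\infty$ yields $\frac13\bigl(\frac{1}{32}+\frac{1}{64}\bigr)\|f''\|_\infty\sum_i h_i^3=\frac{1}{64}\|f''\|_\infty\sum_i h_i^3$, which is \eqref{3.3}. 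The only step requiring genuine care is the sign bookkeeping in the derivative-correction term: the offset $\xi_i-\frac{x_i+x_{i+1}}{2}$ is \emph{negative}, and tracking this correctly is what converts the subtracted term of $S(f,I_n,\xi)$ into the added term of $\overline{S}(f,I_n)$. Beyond that the argument is entirely computational and presents no real obstacle.
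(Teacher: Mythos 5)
Your proposal is correct and is exactly the argument the paper intends: the corollary follows from Theorem~\ref{Th3.1} by substituting $\xi_i=\frac{3x_i+x_{i+1}}{4}$, and your computations of the reflected node, the midpoint offset $-\frac{h_i}{4}$ (with the sign flip producing the $+\frac{1}{16}$ term), and the bound $\frac13\bigl(\frac{1}{32}+\frac{1}{64}\bigr)=\frac{1}{64}$ are all accurate.
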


\begin{remark} \label{re2}
It is obvious that inequality \eqref{3.3} is better than   \cite[inequality \eqref{3.1}]{ds2000} due to a smaller error.
\end{remark}

\section{Application to probability density functions}

Now, let $X$ be a random variable taking values in the finite interval
$[a,b]$, with the probability density function $f : [a, b]\rightarrow [0, 1]$ and with
the cumulative distribution function $$F (x) = Pr (X \leq  x) = \int_a^x f (t) dt.$$

The following result holds:
\begin{theorem}\label{Th4.1}
With the above assumptions, we have the inequality
\begin{align}
&\left|\frac{1}{2}\left[\frac{F(x)+F(a+b-x)}{2}+\frac{1}{2}\right]\right.\nonumber \\
&\left.-\frac{1}{2}\left(x-\frac{a+b}{2}\right)\frac{f(x)-f(a+b-x)}{2}
-\frac{b-E(X)}{b-a} \right| \nonumber \\
\leq&\left[\frac{(\frac{a+3b}{4}-x)(x-a)^{2}}{3(b-a)^{3}}+
\frac{1}{3}\frac{(\frac{a+b}{2}-x)^{3}}{(b-a)^{3}}\right](b-a)^{2}\|f'\|_{\infty}\label{4.1}
\end{align}
for all $x\in[a,\frac{a+b}{2}]$, where $E (X)$ is the expectation of $X$.
\end{theorem}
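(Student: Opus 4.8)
The plan is to apply Theorem~\ref{Th2.1} directly to the cumulative distribution function $F$ in place of the mapping $f$. The hypotheses transfer cleanly: since $f$ is a probability density function that is absolutely continuous with $f'\in L^{\infty}([a,b])$, and $F'=f$, $F''=f'$, the function $F$ has absolutely continuous first derivative and satisfies $F''\in L^{\infty}([a,b])$ with $\|F''\|_{\infty}=\|f'\|_{\infty}$. Thus \eqref{2.1} is applicable with $F$ as the integrand, and the right-hand side of \eqref{2.1} becomes exactly the bound claimed in \eqref{4.1} once $\|F''\|_{\infty}$ is rewritten as $\|f'\|_{\infty}$ (the two displayed first terms agree, since $\frac{1}{3}\frac{(\frac{a+3b}{4}-x)(x-a)^2}{(b-a)^3}$ and $\frac{(\frac{a+3b}{4}-x)(x-a)^2}{3(b-a)^3}$ are identical).

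First I would substitute $F$ for $f$ in \eqref{2.1} and read off each term on the left-hand side. The derivative difference $\frac{F'(x)-F'(a+b-x)}{2}$ becomes $\frac{f(x)-f(a+b-x)}{2}$ because $F'=f$, which accounts for the second term in \eqref{4.1}. The boundary average simplifies because $F(a)=\Pr(X\leq a)=0$ and $F(b)=\Pr(X\leq b)=1$, so $\frac{F(a)+F(b)}{2}=\frac{1}{2}$, which is exactly the $\frac{1}{2}$ appearing inside the bracket of \eqref{4.1}.

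The one genuine computation is to evaluate the integral mean $\frac{1}{b-a}\int_a^b F(t)\,dt$ in terms of the expectation $E(X)$. Integrating by parts,
\begin{align*}
\int_a^b F(t)\,dt = \Big[t\,F(t)\Big]_a^b - \int_a^b t\,F'(t)\,dt
= b\,F(b) - a\,F(a) - \int_a^b t\,f(t)\,dt,
\end{align*}
and using $F(b)=1$, $F(a)=0$, together with $E(X)=\int_a^b t\,f(t)\,dt$, this yields $\int_a^b F(t)\,dt = b - E(X)$. Hence $\frac{1}{b-a}\int_a^b F(t)\,dt = \frac{b-E(X)}{b-a}$, which is precisely the last term on the left-hand side of \eqref{4.1}.

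Collecting these identifications, the left-hand side of \eqref{2.1} applied to $F$ matches term-by-term with the left-hand side of \eqref{4.1}, and the right-hand sides agree after the substitution $\|F''\|_{\infty}=\|f'\|_{\infty}$; the inequality \eqref{4.1} then follows at once from \eqref{2.1}. I do not expect any real obstacle here, since the statement is a specialization of Theorem~\ref{Th2.1} rather than a new estimate: the only nontrivial step is the integration-by-parts identity $\int_a^b F\,dt = b-E(X)$, and everything else is a matter of reading off the boundary values $F(a)=0$, $F(b)=1$ of the distribution function.
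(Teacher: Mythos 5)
Your proof is correct and follows exactly the paper's route: apply Theorem~\ref{Th2.1} with $F$ in place of $f$, use $F(a)=0$, $F(b)=1$, $F'=f$, $F''=f'$, and the integration-by-parts identity $\int_a^b F(t)\,dt=b-E(X)$. The paper's own proof is just a one-line version of this same argument, so there is nothing to add.
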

\begin{proof}  Follows by \eqref{2.1} on choosing $f = F$ and taking into account
$$E(X)=\int_a^b t dF(t)=b-\int_a^b F(t)dt,$$
we obtain \eqref{4.1}.
\end{proof}

In particular, we have:

\begin{corollary}\label{Th4.1}
With the above assumptions, we have the inequality
\begin{align*}
&\left|\frac{1}{2}\left[\frac{F(\frac{3a+b}{4})+F(\frac{a+3b}{4})}{2}
+\frac{1}{2}\right]+\frac{b-a}{8}\frac{f(\frac{3a+b}{4})
-f(\frac{a+3b}{4})}{2}-\frac{b-E(X)}{b-a} \right|
\nonumber \\
\leq&\frac{1}{64}(b-a)^{2}\|f'\|_{\infty}.
\end{align*}
\end{corollary}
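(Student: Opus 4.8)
The plan is to derive the corollary directly by specializing the general estimate \eqref{4.1} of Theorem~\ref{Th4.1} to the single node $x=\frac{3a+b}{4}$. This is precisely the point that minimizes the right-hand side of \eqref{4.1} over $x\in[a,\frac{a+b}{2}]$, by the very same minimization computation that produced the optimal estimator \eqref{2.9} in the function setting (now with $F$ in the role of $f$ and $f=F'$ in the role of $f'$); hence it yields the sharpest bound available from \eqref{4.1}.

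First I would record the reflected node: with $x=\frac{3a+b}{4}$ one has $a+b-x=\frac{a+3b}{4}$. Consequently the symmetric average $\frac{F(x)+F(a+b-x)}{2}$ becomes $\frac{F(\frac{3a+b}{4})+F(\frac{a+3b}{4})}{2}$, and the derivative difference $\frac{f(x)-f(a+b-x)}{2}$ becomes $\frac{f(\frac{3a+b}{4})-f(\frac{a+3b}{4})}{2}$, matching exactly the arguments appearing in the corollary.

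Next I would compute the weight multiplying the derivative term. Since $x-\frac{a+b}{2}=\frac{3a+b}{4}-\frac{a+b}{2}=-\frac{b-a}{4}$, the coefficient $-\frac12\bigl(x-\frac{a+b}{2}\bigr)$ equals $+\frac{b-a}{8}$. This is the one place requiring a little care: the minus sign in \eqref{4.1} combines with the negative value of $x-\frac{a+b}{2}$ to produce the \emph{plus} sign in front of $\frac{b-a}{8}\cdot\frac{f(\frac{3a+b}{4})-f(\frac{a+3b}{4})}{2}$ displayed in the corollary, so the sign bookkeeping here is the only nonmechanical detail.

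Finally I would simplify the bracketed factor on the right of \eqref{4.1}. Evaluating at $x=\frac{3a+b}{4}$ gives $\frac{a+3b}{4}-x=\frac{b-a}{2}$, $x-a=\frac{b-a}{4}$, and $\frac{a+b}{2}-x=\frac{b-a}{4}$, whence $\bigl(\frac{a+3b}{4}-x\bigr)(x-a)^2=\frac{(b-a)^3}{32}$ and $\bigl(\frac{a+b}{2}-x\bigr)^3=\frac{(b-a)^3}{64}$. Dividing by $(b-a)^3$ and inserting the two factors of $\frac13$ yields $\frac{1}{96}+\frac{1}{192}=\frac{1}{64}$ for the dimensionless constant, so after multiplying by $(b-a)^2\|f'\|_{\infty}$ the right-hand side collapses to $\frac{1}{64}(b-a)^2\|f'\|_{\infty}$, which is the asserted bound. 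Since each step is an elementary substitution into \eqref{4.1}, there is no genuine obstacle beyond the sign tracking noted above; the estimate is immediate once the node is fixed.
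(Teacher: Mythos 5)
Your proposal is correct and coincides with the paper's (implicit) derivation: the corollary is exactly Theorem~\ref{Th4.1}'s inequality \eqref{4.1} specialized at the optimal node $x=\frac{3a+b}{4}$, mirroring how \eqref{2.9} follows from \eqref{2.1}. All of your substitutions check out, including the sign flip giving $+\frac{b-a}{8}$ and the constant $\frac{1}{96}+\frac{1}{192}=\frac{1}{64}$.
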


\subsection*{Acknowledgments}
This work was partly supported by the National Natural Science Foundation
of China (Grant No. 40975002) and the Natural Science Foundation of the Jiangsu
Higher Education Institutions (Grant No. 09KJB110005). The author would like to thank Professor J. Duoandikoetxea   for bringing reference \cite{d2001} to his attention.

\end{document}